 \newcommand{\fig}[2]{{\small Fig.~#1. #2}}
\newtheorem{theorem}{Theorem}[section]
\newtheorem{lemma}{Lemma}[section]
\newtheorem{definition}{Definition}
\newtheorem{remark}{Remark}
\numberwithin{equation}{section}
 \def\@evenhead{\vbox{\hbox to \textwidth{\thepage\hfil\sl\leftmark\strut}\hrule}}
 \def\@oddhead{\vbox{\hbox to \textwidth{\rightmark\hfill\thepage\strut}\hrule}}
\begin{document}
 \sloppy

\selectlanguage{english}

\centerline{\bf Curvilinear parallelogram identity and mean-value property for}
\centerline{\bf a semilinear hyperbolic equation of second-order}     

\vskip 0.3cm

\centerline{\bf V.~I.~Korzyuk, J.~V.~Rudzko}        

\markboth{\hfill{\footnotesize\rm   V.~I.~Korzyuk, J.~V.~Rudzko  }\hfill}
{\hfill{\footnotesize\sl  Curvilinear parallelogram identity and mean-value property for a semilinear hyperbolic equation}\hfill}
\vskip 0.3cm

\vskip 0.7 cm

\noindent {\bf Key words:} hyperbolic equation, characteristics parallelogram, mean-value theorem, mean-value property.

\vskip 0.2cm

\noindent {\bf AMS Mathematics Subject Classification:} 35B05 (Primary), 35L10, 35L71, 35C15 (Secondary).

\vskip 0.2cm

\noindent {\bf Abstract.} In this paper, we discuss some of the important qualitative properties of solutions of second-order hyperbolic equations, whose coefficients of the terms involving the second-order derivatives are independent of the desired function and its derivatives. Solutions of these equations have a special property called the curvilinear parallelogram identity (or the mean-value property), which can be used to solve some initial-boundary value problems.

\section{\large Introduction}

The terms ``mean value theorem'', ``mean value property'', ``mean formula'', and ``mean value'' are quite common in mathematics (e.~g., real and complex analysis, probability theory, partial differential equations) and physics. But they may pertain to diverse phenomena.

In the theory of partial differential equations, mean value theorems for harmonic functions and solutions of various elliptic equations are best known. They include the classical mean value property for harmonic functions \cite{KorzuykBook} and the results obtained in works \cite{Ilin1, Ilin2, Ilin3, PolovinkinEll1} for more general elliptic equations and elliptic operators. Similar theorems are formulated for (hypoelliptic) parabolic equations \cite{Kuptsov1, Kuptsov2, Kuptsov3}.

Such facts can be established not only for elliptic and parabolic equations but also for hyperbolic ones. First of all, we should mention the classical Asgeirsson's mean value theorem~\cite{Courant, Hoermander1} the ultrahyperbolic differential equation, and the mean value theorem of Bitsadze and Nakhushev for the wave equation~\cite{Bitsadze1}. Spherical means can be used to solve initial-value problems as it's done in the work~\cite{John} for the wave equation and the Darboux equation. Using a symbolic approach~\cite{Pokrovskii1} several results \cite{Meshkov1, Meshkov2, Meshkov3, Meshkov4, Meshkov5, Meshkov6, Meshkov7, Meshkov8, Meshkov9, Meshkov10} connected with mean values of solutions of various differential equations were obtained in works of Polovinkin and Meshkov et al. It should also be said that in these works, the parallelogram identity (parallelogram rule) for the wave equation (which the authors call `difference mean-value formula') was generalized to the following cases: a (nonstrictly) hyperbolic equation with constant coefficients of third-order \cite{Meshkov1}, fourth-order \cite{Meshkov2}, higher-order~\cite{Meshkov10}, an equation with constant coefficients and with the operator represented by the product of the first order hyperbolic operators and the second-order elliptic operators \cite{Meshkov7}. These results can be used to obtain analytical and numerical solutions to differential equations as it was done in~\cite{KorzuykBook, Korzyuk1, Jokhadze1, Matus1, Matus2}. However, these results are given mainly for equations with constant coefficients because of the methods used (Fourier transform, search for accompanying distribution with compact support).

Moreover, the characteristic parallelogram of differential equations has some applications in hydrodynamics \cite{Lavrentiev}.

In this paper, we derive the identity of a curvilinear characteristic parallelogram for a general semilinear second-order hyperbolic equation using the method of characteristics~\cite{KorzuykBook}. This identity can be considered as the mean value theorem in some sense.

\section{\large Semilinear hyperbolic equation}

\indent In the domain $\Omega \subseteq \mathbb{R}^2$ of two independent variables $\mathbf{x} = (x_1, x_2) \in \Omega \subseteq \mathbb{R}^2$, consider the semilinear hyperbolic equation of second-order
\begin{equation}\label{2.1}
A u(x_1, x_2) = f(x_1, x_2, u(x_1, x_2), \partial_{x_1} u(x_1, x_2), \partial_{x_2} u(x_1, x_2)),
\end{equation}
where the operator $A$ is defined as
$$
A u(x_1, x_2): = a(x_1, x_2) \partial_{x_1}^2 u(x_1, x_2) + 2 b(x_1, x_2) \partial_{x_1} \partial_{x_2} u(x_1, x_2) + c(x_1, x_2) \partial_{x_2}^2 u(x_1, x_2),
$$
and is hyperbolic (this means $b^2(\mathbf{x}) - a(\mathbf{x}) c(\mathbf{x}) > 0$ for any $x \in \Omega$).

Eq.~\eqref{2.1} has two families of characteristics: $\gamma_1(x_1,x_2)$ and $\gamma_2(x_1,x_2)$, which are the first integrals of the ordinary differential equation \cite{KorzuykBook}
\begin{equation}\label{2.2}
a(\mathbf{x}) (\mathrm{d}x_2)^2 - 2 b(\mathbf{x}) \mathrm{d}x_1 \mathrm{d}x_2 + c(\mathbf{x}) (\mathrm{d}x_1)^2 = 0,
\end{equation}
and solutions of the equation of characteristics \cite{KorzuykBook}
\begin{equation}\label{2.3}
a\left(\frac{\partial \gamma_i}{\partial x_1}\right)^2 + 2 b \frac{\partial \gamma_i}{\partial x_1} \frac{\partial \gamma_i}{\partial x_2} + c \left(\frac{\partial \gamma_i}{\partial x_2}\right)^2 = 0,\quad i =1,2.
\end{equation}
It is well known~\cite{KorzuykBook} that Eq. (2.2), in general, can be decomposed into two equations
$$
a(\mathbf{x}) \mathrm{d}x_2 - (b(\mathbf{x}) \pm \sqrt{b^2(\mathbf{x}) - a(\mathbf{x}) c(\mathbf{x})}) \mathrm{d}x_1 = 0, \text{ if } a(\mathbf{x}) \neq 0,
$$
or
$$
c(\mathbf{x}) \mathrm{d}x_1 - (b(\mathbf{x}) \pm \sqrt{b^2(\mathbf{x}) - a(\mathbf{x}) c(\mathbf{x})}) \mathrm{d}x_2 = 0, \text{ if } c(\mathbf{x}) \neq 0,
$$
or
$$
\mathrm{d}x_1 \mathrm{d}x_2 = 0, \text{ if } a(\mathbf{x}) = c(\mathbf{x}) = 0.
$$

Therefore, we can assume that $\gamma_1$ and $\gamma_2$ are the first integrals of different differential equations and they are functionally independent since the Jacobian $\left|\dfrac{\partial(\gamma_1, \gamma_2)}{\partial(x_1, x_2)}\right|$ is nonzero~\cite{KorzuykBook}.

If the curves $\gamma_i$, $i=1,2$, have a parametric representation $(x_1^{(i)}(t),x_2^{(i)}(t))$, where $x_j^{(i)}$, $j=1,2$, are some twice continuously differentiable functions, then the equation holds~\cite{DiBenedetto}
$$
a\big(D x_2^{(i)}\big)^2 - 2 b D x_1^{(i)} D x_2^{(i)} + c \big(D x_1^{(i)}\big)^2 = 0,\quad i=1,2,
$$
where $D$ is the operator of the ordinary derivative.

\section{\large Curvilinear characteristic parallelogram}

\begin{definition}
{\rm Curvilinear characteristic parallelogram of the hyperbolic differential equation~(2.1) is a set $\Pi = \{\mathbf{x}\,|\,\gamma_1(\mathbf{x}) \in [l_1, l_2] \land \gamma_2(\mathbf{x}) \in [r_1, r_2] \}$, where $l_1$, $l_2$, $r_1$, $r_2$ are some real numbers and $\gamma_i$, $i=1,2$ are two different functionally independent characteristics. }
\end{definition}

\begin{remark}
{\rm Definition 1 is correct. It is known~\cite{Amelya} that any other first integral (2.2) has the form $q \circ \gamma_1$, where $q$ is some continuously differentiable function. If $\gamma_1(\mathbf{x}) \in [l_1, l_2]$, then, due to the continuity of $q$, $q(\gamma_1(\mathbf{x})) \in q([l_1,l_2])=[\widetilde{l}_1,\widetilde{l}_2]$. Thus, the curvilinear characteristic parallelogram does not depend on the considered characteristics.}
\end{remark}

\begin{center}
\includegraphics[width=0.81\textwidth]{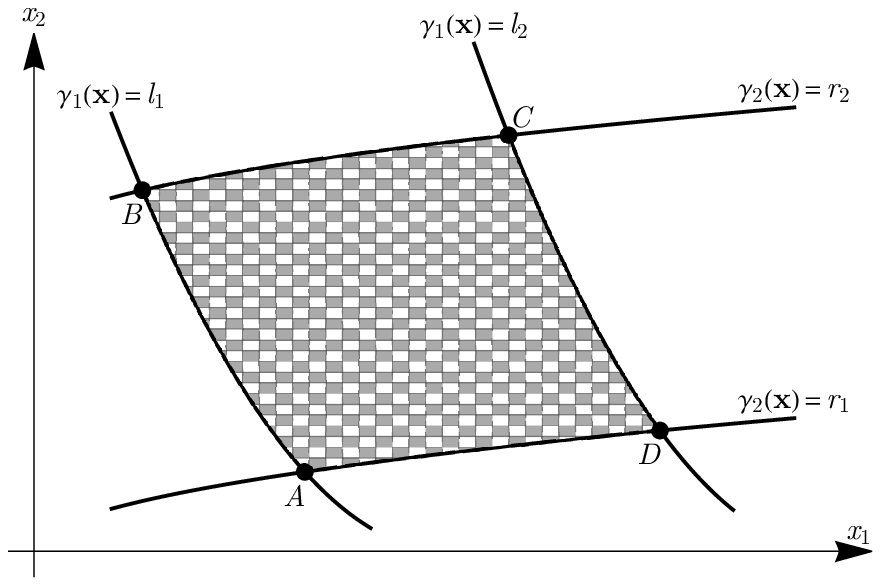}

\fig{1}{Curvilinear characteristic parallelogram}
\end{center}

\begin{definition}
{\rm Vertices of the curvilinear characteristic parallelogram $\Pi = \{\mathbf{x}\,|\,\gamma_1(\mathbf{x}) \in [l_1, l_2] \land \gamma_2(\mathbf{x}) \in [r_1, r_2] \}$ are points $\mathbf{x}$ such that $\gamma_1(x)=l_i \land \gamma_2(x)=r_j$, $(i,j)\in\{1,2\}\times\{1,2\}$. }
\end{definition}

\begin{remark}
{\rm Definition 2 is correct. We should show that $q \circ \gamma_1$, where $q$ is some continuously differentiable function, maps $[l_1, l_2]$ into $[\widetilde{l}_1,\widetilde{l}_2]$ and $\partial([l_1, l_2])$ into $\partial([\widetilde{l}_1,\widetilde{l}_2])$. If the function $q$ is increasing or decreasing, these mappings must be true. But if the function $q$ does not satisfy these conditions, then there exists at least one point $l_0 \in (l_1, l_2)$ such that $q'(l_0) = 0$. Due to the continuity of $q$, there exists a point $\mathbf{x} \in \Pi$ such that $\gamma_1(\mathbf{x}) = l_0 \in (l_1, l_2)$ This implies
$$
\left|\frac{\partial(q \circ \gamma_1,\gamma_2)}{\partial(x_1,x_2)}\right|(\mathbf{x}) = \begin{vmatrix}
q'(\gamma_1(\mathbf{x})) \partial_{x_1} \gamma_1(\mathbf{x}) & q'(\gamma_1(\mathbf{x})) \partial_{x_2} \gamma_1(\mathbf{x}) \\
\partial_{x_1} \gamma_2(\mathbf{x}) & \partial_{x_2} \gamma_2(\mathbf{x})
\end{vmatrix} = 0 \text{ when } \gamma_1(\mathbf{x}) = l_0.
$$
But we only consider characteristics with nonzero Jacobian. The correctness is proved.
}
\end{remark}

\begin{definition}
{\rm Opposite vertices of the curvilinear characteristic parallelogram $\Pi = \{\mathbf{x} \,|\, \gamma_1(\mathbf{x}) \in [l_1, l_2] \land \gamma_2(\mathbf{x}) \in [r_1, r_2] \}$ are its vertices $\mathbf{x}_1$ and $\mathbf{x}_2$ such that $\gamma_1(\mathbf{x}_1) \neq \gamma_1(\mathbf{x}_2)$ and $\gamma_2(\mathbf{x}_1) \neq \gamma_2(\mathbf{x}_2)$. }
\end{definition}

Point transformation of variables of the form $y_1=\gamma_1(x_1,x_2)$, $y_1=\gamma_2(x_1,x_2)$ is invertible \cite{Polyanin2005}, i.e., there is an inverse change of variables $x_1=\gamma_1^{-1}(y_1, y_2)$, $x_2=\gamma_2^{-1}(y_1,y_2)$.


\begin{lemma}\label{lem1}
Let $\Pi = \{\mathbf{x}\,|\,\gamma_1(\mathbf{x}) \in [l_1, l_2] \land \gamma_2(\mathbf{x}) \in [r_1, r_2] \}$ is a curvilinear characteristic parallelogram and the conditions $a \in C^2(\Pi)$, $b\in C^2(\Pi)$, $c\in C^2(\Pi)$, and $f\in C^1(\Pi\times\mathbb{R}^3)$ be satisfied. The function $u$ belongs to the class $C^2(\Pi)$ and satisfies Eq.~(2.1) if and only if it can be represented as
\begingroup
\allowdisplaybreaks
\begin{align*}
u(\mathbf{x})&=g_{1}\left(\gamma_{1}(\mathbf{x})\right)+g_{2}\left(\gamma_{2}(\mathbf{x})\right)+\\
&+\int\limits_{l^{(0)}}^{\gamma_{1}(\mathbf{x})} d z_{1} \int\limits_{r^{(0)}}^{\gamma_{2}(\mathbf{x})} \frac{1}{2\left(a \partial_{x_{1}} \gamma_{1} \partial_{x_{1}} \gamma_{2}+b\left(\partial_{x_{2}} \gamma_{2} \partial_{x_{1}} \gamma_{1}+\partial_{x_{2}} \gamma_{1} \partial_{x_{1}} \gamma_{2}\right)+c \partial_{x_{2}} \gamma_{1} \partial_{x_{2}} \gamma_{2}\right)(\gamma_{1}^{-1}(\mathbf{z}), \gamma_{2}^{-1}(\mathbf{z}))} \times \\
&\times \left[f\left(\gamma_{1}^{-1}(\mathbf{z}), \gamma_{2}^{-1}(\mathbf{z}), u\left(\gamma_{1}^{-1}(\mathbf{z}), \gamma_{2}^{-1}(\mathbf{z})\right), \right.\right. \\
&\left. \partial_{x_{1}} u\left(\gamma_{1}^{-1}(\mathbf{z}), \gamma_{2}^{-1}(\mathbf{z})\right), \partial_{x_{2}} u\left(\gamma_{1}^{-1}(\mathbf{z}), \gamma_{2}^{-1}(\mathbf{z})\right)\right) - \\
&-A \gamma_{1}\left(\gamma_{1}^{-1}(\mathbf{z}), \gamma_{2}^{-1}(\mathbf{z})\right)\left(\partial_{x_{1}} u\left(\gamma_{1}^{-1}(\mathbf{z}), \gamma_{2}^{-1}(\mathbf{z})\right) \partial_{y_{1}} \gamma_{1}^{-1}(\mathbf{z})\:+\right. \\
&\left.+\:\partial_{x_{2}} u\left(\gamma_{1}^{-1}(\mathbf{z}), \gamma_{2}^{-1}(\mathbf{z})\right) \partial_{y_{1}} \gamma_{2}^{-1}(\mathbf{z})\right) -\\
&-A \gamma_{2}\left(\gamma_{1}^{-1}(\mathbf{z}), \gamma_{2}^{-1}(\mathbf{z})\right)\left(\partial_{x_{1}} u\left(\gamma_{1}^{-1}(\mathbf{z}), \gamma_{2}^{-1}(\mathbf{z})\right) \partial_{y_{2}} \gamma_{1}^{-1}(\mathbf{z})\:+\right. \\
&\left.\left.+\:\partial_{x_{2}} u\left(\gamma_{1}^{-1}(\mathbf{z}), \gamma_{2}^{-1}(\mathbf{z})\right) \partial_{y_{2}} \gamma_{2}^{-1}(\mathbf{z})\right) \right] d z_{2}, \stepcounter{equation}\tag{\theequation}\label{3.1}
\end{align*}
\endgroup
where $l^{(0)} \in [l_1, l_2]$, $r^{(0)} \in [r_1, r_2]$, and the functions $g_1$, $g_2$ belong to the classes $C^2(\mathfrak{D}(g_1))$, $C^2(\mathfrak{D}(g_2))$ respectively.
\end{lemma}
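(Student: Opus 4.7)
The plan is to reduce equation (2.1) to canonical form in the characteristic coordinates $y_1=\gamma_1(\mathbf{x})$, $y_2=\gamma_2(\mathbf{x})$ (invertible, as just noted before the lemma) and then integrate the resulting identity twice by the fundamental theorem of calculus.

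Setting $\tilde u(\mathbf{y}):=u(\gamma_1^{-1}(\mathbf{y}),\gamma_2^{-1}(\mathbf{y}))$ and applying the chain rule to second order, one obtains the standard identity
$$
Au \;=\; \tilde a\,\partial_{y_1}^2\tilde u + 2\tilde b\,\partial_{y_1}\partial_{y_2}\tilde u + \tilde c\,\partial_{y_2}^2\tilde u + (A\gamma_1)\,\partial_{y_1}\tilde u + (A\gamma_2)\,\partial_{y_2}\tilde u,
$$
in which $\tilde a$, $\tilde c$ are the quadratic forms (2.3) evaluated on $\gamma_1$, $\gamma_2$ and therefore vanish, and $\tilde b$ is precisely the expression occupying the denominator of (3.1). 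Because the discriminant transforms by the square of the Jacobian $J$ of $(\gamma_1,\gamma_2)$, one has $\tilde b^{\,2}=(b^2-ac)\,J^{\,2}>0$ by hyperbolicity, so division by $2\tilde b$ is legitimate. Equation (2.1) thus takes the canonical form
$$
\partial_{y_1}\partial_{y_2}\tilde u \;=\; \frac{1}{2\tilde b}\bigl[f - (A\gamma_1)\,\partial_{y_1}\tilde u - (A\gamma_2)\,\partial_{y_2}\tilde u\bigr]\Big|_{\mathbf{x}=\gamma^{-1}(\mathbf{y})}.
$$

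For the forward implication, I would integrate this identity in $z_2$ from $r^{(0)}$ to $\gamma_2(\mathbf{x})$ and then in $z_1$ from $l^{(0)}$ to $\gamma_1(\mathbf{x})$. The left side yields $\tilde u(\mathbf{y})-\tilde u(l^{(0)},y_2)-\tilde u(y_1,r^{(0)})+\tilde u(l^{(0)},r^{(0)})$; moving the boundary terms to the right and absorbing them into $g_1(y_1):=\tilde u(y_1,r^{(0)})-\tfrac12\tilde u(l^{(0)},r^{(0)})$ and $g_2(y_2):=\tilde u(l^{(0)},y_2)-\tfrac12\tilde u(l^{(0)},r^{(0)})$ yields the representation (3.1), after rewriting $\partial_{y_i}\tilde u$ via the chain rule as $\partial_{x_1}u\,\partial_{y_i}\gamma_1^{-1}+\partial_{x_2}u\,\partial_{y_i}\gamma_2^{-1}$ (this is exactly the way the drift terms appear in the integrand). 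The regularity $g_1,g_2\in C^2$ follows from $u\in C^2$ and $a,b,c\in C^2$. For the converse, if $u$ is defined by (3.1), then differentiating the iterated integral once with respect to $\gamma_1(\mathbf{x})$ and once with respect to $\gamma_2(\mathbf{x})$ via the fundamental theorem of calculus (noting that $g_1(\gamma_1(\mathbf{x}))$ and $g_2(\gamma_2(\mathbf{x}))$ are annihilated by $\partial_{y_1}\partial_{y_2}$) reproduces the canonical equation satisfied by $\tilde u$; applying the chain-rule identity above in reverse gives $Au=f$, while the regularity $u\in C^2(\Pi)$ follows from the hypotheses on $g_1,g_2,a,b,c,f$.

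The main obstacle is not the integrations, which are automatic once the canonical form is in hand, but the bookkeeping of the chain-rule expansion producing the decomposition of $Au$: one must identify the coefficient of $\partial_{y_1}\partial_{y_2}\tilde u$ with exactly the symmetrized expression $a\partial_{x_1}\gamma_1\partial_{x_1}\gamma_2+b(\partial_{x_2}\gamma_2\partial_{x_1}\gamma_1+\partial_{x_2}\gamma_1\partial_{x_1}\gamma_2)+c\partial_{x_2}\gamma_1\partial_{x_2}\gamma_2$ and the coefficients of $\partial_{y_i}\tilde u$ with $A\gamma_i$, which requires patient use of the formula $\partial_{x_i}\partial_{x_j}u=\sum_{k,l}(\partial_{y_k}\partial_{y_l}\tilde u)(\partial_{x_i}\gamma_k)(\partial_{x_j}\gamma_l)+\sum_k(\partial_{y_k}\tilde u)(\partial_{x_i}\partial_{x_j}\gamma_k)$ together with the symmetry of mixed partials and the vanishing $\tilde a=\tilde c=0$ from (2.3).
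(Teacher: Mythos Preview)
Your proposal is correct and follows essentially the same route as the paper: pass to characteristic coordinates $y_i=\gamma_i(\mathbf{x})$, use (2.3) to kill the $\partial_{y_i}^2$ terms so that the equation reduces to the canonical form $2\tilde b\,\partial_{y_1}\partial_{y_2}\tilde u + (A\gamma_1)\partial_{y_1}\tilde u + (A\gamma_2)\partial_{y_2}\tilde u = f$, integrate twice, and revert to $(x_1,x_2)$. If anything, you supply more justification than the paper does---in particular your argument that $\tilde b\neq 0$ via the transformation law of the discriminant, and your explicit identification of $g_1,g_2$ as traces of $\tilde u$ along the coordinate lines $y_2=r^{(0)}$ and $y_1=l^{(0)}$---whereas the paper simply writes down the transformed equation and says ``integrate it twice.''
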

\begin{proof}
Let the function $u \in C^2(\Pi)$ satisfy Eq. (2.1). Making the nonlinear nondegenerate change of independent variables $y_1=\gamma_1(x_1,x_2)$, $y_1=\gamma_2(x_1,x_2)$ and denoting $u(x_1,x_2 )=v(y_1,y_2)$ we obtain the new differential equation
$$
\begin{aligned}
&2\left(a \partial_{x_{1}} \gamma_{1} \partial_{x_{1}} \gamma_{2}+b\left(\partial_{x_{2}} \gamma_{2} \partial_{x_{1}} \gamma_{1}+\partial_{x_{2}} \gamma_{1} \partial_{x_{1}} \gamma_{2}\right)+c \partial_{x_{2}} \gamma_{1} \partial_{x_{2}} \gamma_{2}\right)\left(\gamma_{1}^{-1}(\mathbf{y}), \gamma_{2}^{-1}(\mathbf{y})\right) \times \\
&\times \partial_{y_{1}} \partial_{y_{2}} v(\mathbf{y}) + A \gamma_{1}\left(\gamma_{1}^{-1}(\mathbf{y}), \gamma_{2}^{-1}(\mathbf{y})\right) \partial_{y_{1}} v(\mathbf{y})+A \gamma_{2}\left(\gamma_{1}^{-1}(\mathbf{y}), \gamma_{2}^{-1}(\mathbf{y})\right) \partial_{y_{2}} v(\mathbf{y}) = \\
&=f\left(\gamma_{1}^{-1}(\mathbf{y}), \gamma_{2}^{-1}(\mathbf{y}), u\left(\gamma_{1}^{-1}(\mathbf{y}), \gamma_{2}^{-1}(\mathbf{y})\right), \partial_{x_{1}} u\left(\gamma_{1}^{-1}(\mathbf{y}), \gamma_{2}^{-1}(\mathbf{y})\right),\right.\\
&\left. \partial_{x_{2}} u\left(\gamma_{1}^{-1}(\mathbf{y}), \gamma_{2}^{-1}(\mathbf{y})\right)\right)= f\left(\gamma_{1}^{-1}(\mathbf{y}), \gamma_{2}^{-1}(\mathbf{y}), v(\mathbf{y}), \partial_{y_{1}} v(\mathbf{y}) \partial_{x_{1}} \gamma_{1}\left(\gamma_{1}^{-1}(\mathbf{y}), \gamma_{2}^{-1}(\mathbf{y})\right)\right.+ \\
&+\partial_{y_{2}} v(\mathbf{y}) \partial_{x_{1}} \gamma_{2}\left(\gamma_{1}^{-1}(\mathbf{y}), \gamma_{2}^{-1}(\mathbf{y})\right), \partial_{y_{1}} v(\mathbf{y}) \partial_{x_{2}} \gamma_{1}\left(\gamma_{1}^{-1}(\mathbf{y}), \gamma_{2}^{-1}(\mathbf{y})\right) \\
&\left.+\:\partial_{y_{2}} v(\mathbf{y}) \partial_{x_{2}} \gamma_{2}\left(\gamma_{1}^{-1}(\mathbf{y}), \gamma_{2}^{-1}(\mathbf{y})\right)\right)
\end{aligned}
$$
Let us integrate it twice to obtain the equation
$$
\begin{aligned}
v(\mathbf{y})&=g_{1}\left(\mathbf{y}\right)+g_{2}\left(\mathbf{y}\right)+\\
&+\int\limits_{l^{(0)}}^{y_1} d z_{1} \int\limits_{r^{(0)}}^{y_2} \frac{1}{2\left(a \partial_{x_{1}} \gamma_{1} \partial_{x_{1}} \gamma_{2}+b\left(\partial_{x_{2}} \gamma_{2} \partial_{x_{1}} \gamma_{1}+\partial_{x_{2}} \gamma_{1} \partial_{x_{1}} \gamma_{2}\right)+c \partial_{x_{2}} \gamma_{1} \partial_{x_{2}} \gamma_{2}\right)(\gamma_{1}^{-1}(\mathbf{y}), \gamma_{2}^{-1}(\mathbf{y}))} \times \\
&\times \left[f\left(\gamma_{1}^{-1}(\mathbf{z}), \gamma_{2}^{-1}(\mathbf{z}), u\left(\gamma_{1}^{-1}(\mathbf{z}), \gamma_{2}^{-1}(\mathbf{z})\right), \right.\right. \\
&\left. \partial_{x_{1}} u\left(\gamma_{1}^{-1}(\mathbf{z}), \gamma_{2}^{-1}(\mathbf{z})\right), \partial_{x_{2}} u\left(\gamma_{1}^{-1}(\mathbf{z}), \gamma_{2}^{-1}(\mathbf{z})\right)\right) - \\
&- \left. A \gamma_1 (\gamma_1^{-1}(\mathbf{z}), \gamma_2^{-1}(\mathbf{z})) \partial_{y_1} v(\mathbf{z}) - A \gamma_2(\gamma_1^{-1}(\mathbf{z}), \gamma_2^{-1}(\mathbf{z})) \partial_{y_2} v(\mathbf{z}) \right] d z_{2},
\end{aligned}
$$
Returning to the variables $x_1$ and $x_2$, we obtain Eq. (3.1). This also implies that the functions $g_j$ belong to the class $C^2(\mathfrak{D}(g_1))$, $j=1,2$. 

Substituting the representations (3.1) into Eq. (2.1), we verify that the function $u$ satisfies this equation in $\Pi$.
\end{proof}

\begin{remark}
{\rm Under some additional conditions on the functions $f$, $a$, $b$, $c$, $g_1$, $g_2$, we can show the solvability of the integro-differential equation (3.1) using the methods proposed in the works \cite{Evans, Korzyuk2, Vainberg}.
}
\end{remark}

For the convenience of further presentation, we introduce the notation
\begingroup
\allowdisplaybreaks
\begin{align*}
&\beta = 2\left(a \partial_{x_{1}} \gamma_{1} \partial_{x_{1}} \gamma_{2}+b\left(\partial_{x_{2}} \gamma_{2} \partial_{x_{1}} \gamma_{1}+\partial_{x_{2}} \gamma_{1} \partial_{x_{1}} \gamma_{2}\right)+c \partial_{x_{2}} \gamma_{1} \partial_{x_{2}} \gamma_{2}\right), \\
&K(\mathbf{z}, p, q, r) = f(\gamma_{1}^{-1}(\mathbf{z}), \gamma_{2}^{-1}(\mathbf{z}), p, q, r)\:- \\
&- A \gamma_{1}(\gamma_{1}^{-1}(\mathbf{z}), \gamma_{2}^{-1}(\mathbf{z})) (q \partial_{y_1} \gamma_1^{-1}(\mathbf{z}) + r \partial_{y_1} \gamma_2^{-1}(\mathbf{z}))\:- \\
&- A \gamma_{2}(\gamma_{1}^{-1}(\mathbf{z}), \gamma_{2}^{-1}(\mathbf{z})) (q \partial_{y_2} \gamma_1^{-1}(\mathbf{z}) + r \partial_{y_2} \gamma_2^{-1}(\mathbf{z})), \\
&\widetilde{K}(\mathbf{z}, p, q, r) = (\beta(\gamma_{1}^{-1}(\mathbf{z}), \gamma_{2}^{-1}(\mathbf{z})))^{-1} K(\mathbf{z}, p, q, r)
\end{align*}
\endgroup

\section{\large Curvilinear parallelogram identity}

\begin{theorem}\label{th1}
Let the conditions $a\in C^2(\Omega)$, $b\in C^2(\Omega)$, $c\in C^2(\Omega)$, $f\in C^1(\Omega\times \mathbb{R}^3)$, and $b^2(\mathbf{x}) - a(\mathbf{x}) c(\mathbf{x}) > 0$ be satisfied, and let the function $u$ belong to the class $C^2(\Omega)$ and be a solution of the hyperbolic equation~(2.1). Then for any curvilinear characteristic parallelogram $\Pi = \{\mathbf{x}\,|\,\gamma_1(\mathbf{x}) \in [l_1, l_2] \land \gamma_2(\mathbf{x}) \in [r_1, r_2] \} \subseteq \Omega$ with vertices $A(\gamma_1^{-1}(l_1,r_1),\gamma_2^{-1} (l_1,r_1))$, $B(\gamma_1^{-1}(l_1,r_2),\gamma_2^{-1} (l_1,r_2))$, $C(\gamma_1^{-1}(l_2,r_2),\gamma_2^{-1} (l_2,r_2))$, $(\gamma_1^{-1}(l_2,r_1),\gamma_2^{-1}(l_2,r_1))$, the equality holds
\begin{equation}\label{4.1}
\begin{aligned}
u(A) &- u(B) + u(C) - u(D) =\\
&= \int\limits_{l_1}^{l_2} d z_1 \int\limits_{r_1}^{r_2} \widetilde{K} \Big(\mathbf{z}, u\left(\gamma_{1}^{-1}(\mathbf{z}), \gamma_{2}^{-1}(\mathbf{z})\right), \partial_{x_{1}} u\left(\gamma_{1}^{-1}(\mathbf{z}), \gamma_{2}^{-1}(\mathbf{z})\right)\!,\\
&\quad\quad\partial_{x_{2}} u\left(\gamma_{1}^{-1}(\mathbf{z}), \gamma_{2}^{-1}(\mathbf{z})\right)\!\!\Big) d z_2.
\end{aligned}
\end{equation}
\end{theorem}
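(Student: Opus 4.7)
The plan is to derive the theorem as an immediate corollary of Lemma~\ref{lem1}: apply the representation \eqref{3.1} at each of the four vertices $A,B,C,D$ of the parallelogram $\Pi$, form the alternating sum $u(A)-u(B)+u(C)-u(D)$, and observe that almost everything cancels by a telescoping argument.

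First I would tabulate the characteristic coordinates at the vertices: the pair $(\gamma_1,\gamma_2)$ takes the values $(l_1,r_1)$, $(l_1,r_2)$, $(l_2,r_2)$, $(l_2,r_1)$ at $A$, $B$, $C$, $D$ respectively. Substituting into \eqref{3.1}, the arbitrary single-variable data contribute
\[
\big(g_1(l_1)+g_2(r_1)\big)-\big(g_1(l_1)+g_2(r_2)\big)+\big(g_1(l_2)+g_2(r_2)\big)-\big(g_1(l_2)+g_2(r_1)\big)=0,
\]
so $g_1$ and $g_2$ disappear. Writing $F(\ell,r):=\int_{l^{(0)}}^{\ell}dz_1\int_{r^{(0)}}^{r}\widetilde{K}(\mathbf{z},u,\partial_{x_1}u,\partial_{x_2}u)\,dz_2$ for the double-integral part of \eqref{3.1}, the surviving contribution is $F(l_1,r_1)-F(l_1,r_2)+F(l_2,r_2)-F(l_2,r_1)$. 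Grouping as $[F(l_2,r_2)-F(l_1,r_2)]-[F(l_2,r_1)-F(l_1,r_1)]$ and applying the fundamental theorem of calculus in the outer variable yields
\[
\int_{l_1}^{l_2}\!dz_1\int_{r^{(0)}}^{r_2}\widetilde{K}\,dz_2 \;-\; \int_{l_1}^{l_2}\!dz_1\int_{r^{(0)}}^{r_1}\widetilde{K}\,dz_2 \;=\; \int_{l_1}^{l_2}\!dz_1\int_{r_1}^{r_2}\widetilde{K}\,dz_2,
\]
which is exactly the right-hand side of \eqref{4.1}.

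There is no genuine analytic obstacle, since all of the hard work has already been done in Lemma~\ref{lem1}; what remains is purely bookkeeping. The one point worth verifying is that the choice of base point $(l^{(0)},r^{(0)})\in[l_1,l_2]\times[r_1,r_2]$ is immaterial: shifting it alters $F(\ell,r)$ only by a term of the form $\varphi_1(\ell)+\varphi_2(r)+\mathrm{const}$, which cancels in the alternating sum over the four vertices for the same reason that $g_1$ and $g_2$ did. One should also confirm that the kernel $\widetilde{K}$ defined just before Section~4 coincides with the bracketed integrand appearing in \eqref{3.1}, which is true by construction. After these checks the identity \eqref{4.1} follows.
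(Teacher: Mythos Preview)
Your proof is correct and follows essentially the same route as the paper: both invoke Lemma~\ref{lem1}, evaluate the representation at the four vertices, and observe that the $g_1,g_2$ terms cancel while the double integrals telescope to the right-hand side of \eqref{4.1}. The only cosmetic difference is that the paper fixes the base point as $(l^{(0)},r^{(0)})=(l_1,r_1)$ from the outset, which makes three of the four integrals vanish identically and spares the telescoping step you carry out.
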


\begin{proof}
According to Lemma 1, the function $u$ can be represented in the form
\begin{equation}\label{4.2}
\begin{aligned}
u(\mathbf{x}) &= g_1(\gamma_1(\mathbf{x})) + g_2(\gamma_2(\mathbf{x}))\:+ \\
&+ \int\limits_{l_1}^{\gamma_1(\mathbf{x})} d z_1 \int\limits_{r_1}^{\gamma_2(\mathbf{x})} \widetilde{K} \Big(\mathbf{z}, u\left(\gamma_{1}^{-1}(\mathbf{z}), \gamma_{2}^{-1}(\mathbf{z})\right), \partial_{x_{1}} u\left(\gamma_{1}^{-1}(\mathbf{z}), \gamma_{2}^{-1}(\mathbf{z})\right)\!,\\
&\quad\quad\partial_{x_{2}} u\left(\gamma_{1}^{-1}(\mathbf{z}), \gamma_{2}^{-1}(\mathbf{z})\right)\!\!\Big) d z_2.
\end{aligned}
\end{equation}
where $g_i \in C^2(\mathfrak{D}(g_i))$, $i=1,2$. Using the expression (4.2), we compute
\begingroup
\allowdisplaybreaks
\begin{align*}
&u(A) = g_1(l_1) + g_2(r_1), u(B)=g_1(l_1)+g_2(r_2), u(D)=g_1(l_2)+g_2(r_1), \\
&u(C) = g_1(l_2) + g_2(r_2)\:+ \\
&+\int\limits_{l_1}^{l_2} d z_1 \int\limits_{r_1}^{r_2} \widetilde{K} \Big(\mathbf{z}, u\left(\gamma_{1}^{-1}(\mathbf{z}), \gamma_{2}^{-1}(\mathbf{z})\right), \partial_{x_{1}} u\left(\gamma_{1}^{-1}(\mathbf{z}), \gamma_{2}^{-1}(\mathbf{z})\right)\!,\\
&\quad\quad\partial_{x_{2}} u\left(\gamma_{1}^{-1}(\mathbf{z}), \gamma_{2}^{-1}(\mathbf{z})\right)\!\!\Big) d z_2. \stepcounter{equation}\tag{\theequation}\label{3.3}
\end{align*}
\endgroup
Substituting (4.3) into (4.1) gives the correct equality.
\end{proof}

\begin{theorem}\label{th2}
Let the conditions $u \in C^2(\Omega)$, $a\in C^2(\Omega)$, $b\in C^2(\Omega)$, $c\in C^2(\Omega)$, $f\in C^1(\Omega\times \mathbb{R}^3)$, and $b^2 (\mathbf{x})-a(\mathbf{x})c(\mathbf{x})>0$ be satisfied. 
If for any curvilinear characteristic parallelogram $\Pi = \{\mathbf{x}\,|\, \gamma_1(\mathbf{x}) \in [l_1, l_2] \land \gamma_2(\mathbf{x}) \in [r_1, r_2] \} \subseteq \Omega$ with vertices $A(\gamma_1^{-1}(l_1,r_1),\gamma_2^{-1} (l_1,r_1))$, $B(\gamma_1^{-1}(l_1,r_2),\gamma_2^{-1} (l_1,r_2))$, $C(\gamma_1^{-1}(l_2,r_2),\gamma_2^{-1} (l_2,r_2))$, $(\gamma_1^{-1}(l_2,r_1),\gamma_2^{-1}(l_2,r_1))$, where $\gamma_i$, $i=1,2$ are solutions of Eqs.~(2.2) and $\gamma_i^{-1}$ are defined as before, the equality~(4.1) is satisfied, then the function $u$ is a solution of Eq.~(2.1).
\end{theorem}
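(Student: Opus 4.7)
The plan is to reduce Theorem~\ref{th2} to the pointwise equation~(2.1) via the characteristic change of variables used in the proof of Lemma~\ref{lem1}, but traversed in the opposite direction. I introduce $y_1=\gamma_1(\mathbf{x})$, $y_2=\gamma_2(\mathbf{x})$ and set $v(\mathbf{y})=u(\gamma_1^{-1}(\mathbf{y}),\gamma_2^{-1}(\mathbf{y}))$. Because $u\in C^2(\Omega)$ and $(\gamma_1,\gamma_2)$ is a $C^2$-diffeomorphism with non-vanishing Jacobian, one has $v\in C^2(\gamma(\Omega))$. In these coordinates the four vertices $A,B,C,D$ become the corners $(l_i,r_j)$ of the axis-aligned rectangle $[l_1,l_2]\times[r_1,r_2]$, and two applications of the fundamental theorem of calculus yield
\begin{equation*}
u(A)-u(B)+u(C)-u(D) = \int_{l_1}^{l_2}\!\int_{r_1}^{r_2} \partial_{y_1}\partial_{y_2} v(\mathbf{y})\,dy_2\,dy_1.
\end{equation*}

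Substituting this into the hypothesis~(4.1) turns the standing assumption into
\begin{equation*}
\int_{l_1}^{l_2}\!\int_{r_1}^{r_2} \Bigl[\,\partial_{y_1}\partial_{y_2} v(\mathbf{y}) - \widetilde{K}\bigl(\mathbf{y},v(\mathbf{y}),\partial_{x_1}u,\partial_{x_2}u\bigr)\Bigr]\,dy_2\,dy_1 = 0
\end{equation*}
for every rectangle $[l_1,l_2]\times[r_1,r_2]\subseteq\gamma(\Omega)$. Both integrands are continuous in $\mathbf{y}$ under the regularity hypotheses ($a,b,c\in C^2$, $f\in C^1$, $u\in C^2$, together with the $C^2$-smoothness of $\gamma_i$ and $\gamma_i^{-1}$), so shrinking the rectangle to a point around an arbitrary $\mathbf{y}_0$ and dividing by its area yields the pointwise identity $\partial_{y_1}\partial_{y_2} v(\mathbf{y}) = \widetilde{K}(\mathbf{y},v(\mathbf{y}),\partial_{x_1}u,\partial_{x_2}u)$. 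Multiplying through by $\beta(\gamma_1^{-1}(\mathbf{y}),\gamma_2^{-1}(\mathbf{y}))$ and re-expressing $\partial_{y_j}v = \partial_{x_1}u\,\partial_{y_j}\gamma_1^{-1} + \partial_{x_2}u\,\partial_{y_j}\gamma_2^{-1}$ by the chain rule exactly reverses the coordinate transformation carried out in the proof of Lemma~\ref{lem1} and recovers $Au = f(\mathbf{x},u,\partial_{x_1}u,\partial_{x_2}u)$ at the corresponding point $\mathbf{x}=(\gamma_1^{-1}(\mathbf{y}),\gamma_2^{-1}(\mathbf{y}))$.

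The only step that deserves attention is the passage from ``the integral vanishes over every admissible rectangle'' to ``the integrand vanishes pointwise''. This is a standard Lebesgue-differentiation argument once continuity of both sides is in hand, but it does rely on each point $\mathbf{y}_0\in\gamma(\Omega)$ possessing a neighbourhood basis of rectangles contained in $\gamma(\Omega)$; equivalently, each $\mathbf{x}_0\in\Omega$ must lie in arbitrarily small curvilinear characteristic parallelograms $\Pi\subseteq\Omega$. This follows from the non-vanishing Jacobian via the inverse function theorem, so no genuine obstruction arises, and the remainder of the argument is a routine differentiation-under-the-integral computation.
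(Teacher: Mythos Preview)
Your proof is correct and follows essentially the same strategy as the paper: localize identity~(4.1) by shrinking the characteristic parallelogram to a point to recover the transformed equation in $(y_1,y_2)$-coordinates, then invert the change of variables to obtain~(2.1). The only cosmetic difference is that you first rewrite $u(A)-u(B)+u(C)-u(D)$ as $\int\!\!\int \partial_{y_1}\partial_{y_2}v\,dy_2\,dy_1$ via the fundamental theorem of calculus before shrinking, whereas the paper computes the limit of the difference quotient $(u(A)-u(B)+u(C)-u(D))/(lr)$ directly as $(l,r)\to(0,0)$.
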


\begin{proof}
Let $l_2 = l + l_1$, $r_2 = r + r_1$. So, we can write the coordinates of points $A$, $B$, $C$ and $D$ in the form
$$
\begin{aligned}
A(\gamma_1^{-1}(l_1,r_1),\gamma_2^{-1} (l_1,r_1)),\:& B(\gamma_1^{-1}(l_1,r + r_1),\gamma_2^{-1} (l_1,r + r_1)), \\
C(\gamma_1^{-1}(l + l_1,r + r_1),\gamma_2^{-1} (l + l_1,r + r_1)),\:& D(\gamma_1^{-1}(l + l_1,r_1),\gamma_2^{-1}(l + l_1,r_1)).
\end{aligned}
$$
Let's consider the expression
$$
\begin{aligned}
\frac{u(A) - u(B)}{r} &= \frac{u(\gamma_1^{-1}(l_1,r_1),\gamma_2^{-1} (l_1,r_1)) - u(\gamma_1^{-1}(l_1,r + r_1),\gamma_2^{-1} (l_1,r + r_1))}{r} \xrightarrow[r \to 0]{\:} \\ 
&\xrightarrow[r \to 0]{\:} -\partial_r u(\gamma_1^{-1}(l_1,r_1),\gamma_2^{-1} (l_1,r_1)).
\end{aligned}
$$
In the same way
$$
\frac{u(C) - u(D)}{r} \xrightarrow[r \to 0]{\:} \partial_r u(\gamma_1^{-1}(l_1 + l,r_1),\gamma_2^{-1} (l_1 + l,r_1)).
$$
Now since
$$
\begin{aligned}
& \frac{\partial_r u(\gamma_1^{-1}(l_1 + l,r_1), \gamma_2^{-1} (l_1 + l,r_1)) - \partial_r u(\gamma_1^{-1}(l_1,r_1),\gamma_2^{-1} (l_1,r_1))}{l} \xrightarrow[l \to 0]{\:} \\
&\quad\quad\quad\quad\xrightarrow[l \to 0]{\:} \partial_l \partial_r u(\gamma_1^{-1}(l_1,r_1),\gamma_2^{-1} (l_1,r_1)),
\end{aligned}
$$ 
we obtain $\lim\limits_{(r, l) \to (0,0)} (l r)^{-1} (u(A) - u(B) + u(C) - u(D)) = \partial_l \partial_r u(\gamma_1^{-1}(l_1,r_1),\gamma_2^{-1} (l_1,r_1))$. 
Similarly, we get
$$
\begin{aligned}
&\lim\limits_{(r, l) \to (0,0)} \frac{1}{l r} \int\limits_{l_1}^{l + l_1} d z_1 \int\limits_{r_1}^{r + r_1} \widetilde{K} \Big(\mathbf{z}, u\left(\gamma_{1}^{-1}(\mathbf{z}), \gamma_{2}^{-1}(\mathbf{z})\right), \partial_{x_{1}} u\left(\gamma_{1}^{-1}(\mathbf{z}), \gamma_{2}^{-1}(\mathbf{z})\right)\!,\\
&\partial_{x_{2}} u\left(\gamma_{1}^{-1}(\mathbf{z}), \gamma_{2}^{-1}(\mathbf{z})\right)\!\!\Big) d z_2 = \\
&= \widetilde{K}\Big(\mathbf{z} = (l_1, r_1), u\left(\gamma_{1}^{-1}(\mathbf{z}), \gamma_{2}^{-1}(\mathbf{z})\right), \partial_{x_{1}} u\left(\gamma_{1}^{-1}(\mathbf{z}), \gamma_{2}^{-1}(\mathbf{z})\right), \partial_{x_{2}} u\left(\gamma_{1}^{-1}(\mathbf{z}), \gamma_{2}^{-1}(\mathbf{z})\right)\!\!\Big).
\end{aligned}
$$

Thus
\begingroup
\allowdisplaybreaks
\begin{align*}
&\lim\limits_{(r, l) \to (0,0)} \frac{1}{l r} \Bigg(u(A) - u(B) + u(C) - u(D) - \\
&- \int\limits_{l_1}^{l + l_1} d z_1 \int\limits_{r_1}^{r + r_1} \widetilde{K}\Big(\mathbf{z}, u\left(\gamma_{1}^{-1}(\mathbf{z}), \gamma_{2}^{-1}(\mathbf{z})\right), \partial_{x_{1}} u\left(\gamma_{1}^{-1}(\mathbf{z}), \gamma_{2}^{-1}(\mathbf{z})\right)\!, \partial_{x_{2}} u\left(\gamma_{1}^{-1}(\mathbf{z})\!, \gamma_{2}^{-1}(\mathbf{z})\right)\!\!\Big) d z_2 \Bigg) = \\
& = \lim\limits_{(r, l) \to (0,0)} \frac{u(A) - u(B) + u(C) - u(D)}{l r} - \\
& - \lim\limits_{(r, l) \to (0,0)} \frac{1}{l r} \int\limits_{l_1}^{l + l_1} d z_1 \int\limits_{r_1}^{r + r_1} \widetilde{K} \Big(\mathbf{z}, u\left(\gamma_{1}^{-1}(\mathbf{z}), \gamma_{2}^{-1}(\mathbf{z})\right), \partial_{x_{1}} u\left(\gamma_{1}^{-1}(\mathbf{z}), \gamma_{2}^{-1}(\mathbf{z})\right)\!, \\
&\partial_{x_{2}} u\left(\gamma_{1}^{-1}(\mathbf{z})\!, \gamma_{2}^{-1}(\mathbf{z})\right)\!\!\Big) d z_2 = \partial_l \partial_r u(\gamma_1^{-1}(l_1,r_1),\gamma_2^{-1} (l_1,r_1)) - \\
&-\:\frac{K\left(\mathbf{z} = (l_1, r_1), u\left(\gamma_{1}^{-1}(\mathbf{z}), \gamma_{2}^{-1}(\mathbf{z})\right), \partial_{x_{1}} u\left(\gamma_{1}^{-1}(\mathbf{z}), \gamma_{2}^{-1}(\mathbf{z})\right)\!, \partial_{x_{2}} u\left(\gamma_{1}^{-1}(\mathbf{z})\!, \gamma_{2}^{-1}(\mathbf{z})\right)\right)}{\beta(\gamma_1^{-1}(l_1,r_1),\gamma_2^{-1} (l_1,r_1))}.
\end{align*}
\endgroup
This means that the function $u$ satisfies at the point ${(\gamma_1^{-1}(\mathbf{z}=(y_1=l_1,y_2=r_1)),\gamma_2^{-1}(\mathbf{z}))}$ the differential equation
\begin{equation}\label{4.4}
\begin{aligned}
&\beta(\gamma_1^{-1}(\mathbf{z}),\gamma_2^{-1}(\mathbf{z})) \partial_{y_1} \partial_{y_2} u(\gamma_1^{-1}(\mathbf{z}),\gamma_2^{-1}(\mathbf{z})) = \\
&= f\left(\gamma_{1}^{-1}(\mathbf{z}), \gamma_{2}^{-1}(\mathbf{z}), u\left(\gamma_{1}^{-1}(\mathbf{z}), \gamma_{2}^{-1}(\mathbf{z})\right), \right. \\
&\left. \partial_{x_{1}} u\left(\gamma_{1}^{-1}(\mathbf{z}), \gamma_{2}^{-1}(\mathbf{z})\right), \partial_{x_{2}} u\left(\gamma_{1}^{-1}(\mathbf{z}), \gamma_{2}^{-1}(\mathbf{z})\right)\right) - \\
&-A \gamma_{1}\left(\gamma_{1}^{-1}(\mathbf{z}), \gamma_{2}^{-1}(\mathbf{z})\right)\left(\partial_{x_{1}} u\left(\gamma_{1}^{-1}(\mathbf{z}), \gamma_{2}^{-1}(\mathbf{z})\right) \partial_{y_{1}} \gamma_{1}^{-1}(\mathbf{z})\:+\right. \\
&\left.+\:\partial_{x_{2}} u\left(\gamma_{1}^{-1}(\mathbf{z}), \gamma_{2}^{-1}(\mathbf{z})\right) \partial_{y_{1}} \gamma_{2}^{-1}(\mathbf{z})\right) -\\
&-A \gamma_{2}\left(\gamma_{1}^{-1}(\mathbf{z}), \gamma_{2}^{-1}(\mathbf{z})\right)\left(\partial_{x_{1}} u\left(\gamma_{1}^{-1}(\mathbf{z}), \gamma_{2}^{-1}(\mathbf{z})\right) \partial_{y_{2}} \gamma_{1}^{-1}(\mathbf{z})\:+\right. \\
&\left.+\:\partial_{x_{2}} u\left(\gamma_{1}^{-1}(\mathbf{z}), \gamma_{2}^{-1}(\mathbf{z})\right) \partial_{y_{2}} \gamma_{2}^{-1}(\mathbf{z})\right),
\end{aligned}
\end{equation}
where $x_1=\gamma_1^{-1}(y_1, y_2)$, $x_2=\gamma_2^{-1}(y_1, y_2)$. By virtue of the arbitrariness of $\Pi \subseteq \Omega$, Eq. (4.4) is true for any point $(x_1=\gamma_1^{-1}(\mathbf{z}=(l_1,r_1)),x_2=\gamma_2^{-1} (\mathbf{z}=(l_1,r_1))) \in \Omega$. 

Making the change of variables $x_1=\gamma_1^{-1}(y_1, y_2)$, $x_2=\gamma_2^{-1}(y_1, y_2)$ in Eq. (4.4), we obtain Eq. (\ref{2.1}).
\end{proof}

Note that formula (4.1) can be considered as a kind of mean value theorem.

\section{\large Applications}

\subsection{Wave equation}

Let's consider $A u (x_1, x_2) = \partial_{x_1}^2 u (x_1, x_2) - a^2 \partial_{x_2}^2 u (x_1, x_2)$, where $a>0$ (for definiteness). Then we have $\gamma_1(x_1, x_2) = x_2 - a x_1$, $\gamma_2(x_1, x_2) = x_2 + a x_1$, $\gamma^{-1}_1(y_1, y_2) = (y_2 - y_1)/(2 a)$, $\gamma^{-1}_2(y_1, y_2) = (y_1 + y_2)/2$, $A \gamma_1\equiv 0$, $A \gamma_2\equiv 0$. 

\subsubsection{Parallelogram identity}

Let $f \equiv 0$. In this case, formula (4.1) transforms to
\begingroup
\allowdisplaybreaks
\begin{align*}
&u \left(\frac{r_1-l_1}{2 a}, \frac{l_1+r_1}{2}\right) - u\left(\frac{r_2-l_1}{2 a},\frac{l_1+r_2}{2}\right) + \\
+ \: &u\left(\frac{r_2-l_2}{2 a}, \frac{l_2+r_2}{2}\right) - u\left(\frac{r_1-l_2}{2 a}, \frac{l_2+r_1}{2}\right) = 0,
\stepcounter{equation}\tag{\theequation}\label{5.1}
\end{align*}
\endgroup
where $l_1$, $l_2$, $r_1$ and $r_2$ are some real numbers. Eq. (5.1) is the well-known parallelogram identity for the wave equation.

\subsubsection{Goursat problem}

Let's consider the Goursat problem \cite{Kovnatskaya1}
\begin{equation}\label{5.2}
\left\{\begin{array}{l}
(\partial_{x_1}^2 - a^2 \partial_{x_2}^2) u (\mathbf{x}) = f(\mathbf{x}), \quad 0 < x_1, -a x_1 < x_2 < a x_1, \\
u(x_1, x_2 = a x_1) = \phi^{(1)}(x_1), \quad u (x_1, x_2 = -a x_1) = \phi^{(1)}(x_2),\quad x_1 > 0,
\end{array}\right.
\end{equation}
where $f \in C^1 (\{\mathbf{x}\,|\,0 \leqslant x_1, -a x_1 \leqslant x_2 \leqslant a x_1\})$, $\phi^{(1)} \in C^2([0,\infty))$, $\phi^{(2)} \in C^2([0,\infty))$ and $\phi^{(1)}(0) = \phi^{(2)}(0)$. We can write the classical solution of (5.2) using the formula (4.1). If we choose $C(x_1, x_2)$, $B\!\left(\dfrac{a x_1+x_2}{2 a}, \dfrac{a x_1+x_2}{2}\right)$, $D\!\left(\dfrac{a x_1-x_2}{2 a}, \dfrac{x_2 - a x_1}{2}\right)$, $A(0, 0)$ and apply (4.1), we obtain
\begingroup
\allowdisplaybreaks
\begin{align*}
u(x_1, &x_2) = u(C) = \phi^{(1)}\!\left(\frac{a x_1+x_2}{2 a}\right) + \phi^{(2)}\!\left(\frac{a x_1-x_2}{2 a}\right) - \phi^{(1)}(0)\: - \\ 
&- \frac{1}{4 a^2} \int\limits_{0}^{x_2 - a x_1} d y_1 \int\limits_{0}^{x_2 + a x_1} f\left(\frac{y_2-y_1}{2 a}, \frac{y_1+y_2}{2}\right) d y_2, \quad 0 < x_1, -a x_1 < x_2 < a x_1.
\end{align*}
\begingroup
\begin{center}
\includegraphics[width=0.81\textwidth]{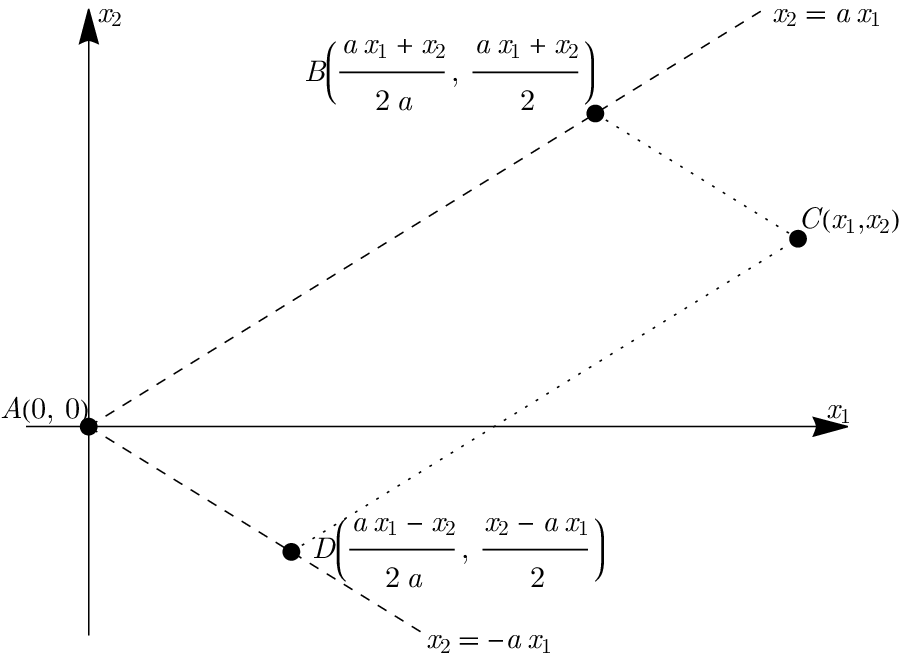}

\fig{2}{To the Goursat problem (5.2).}
\end{center}

\subsubsection{Mixed problem}

Let's consider the first mixed problem \cite{KorzuykBook}
\begin{equation}\label{5.3}
\left\{\begin{array}{l}
(\partial_{x_1}^2 - a^2 \partial_{x_2}^2) u (\mathbf{x}) = f(\mathbf{x}), \quad \mathbf{x} \in (0, \infty)\times(0, \infty), \\
u(0, x_2) = \phi(x_1), \quad \partial_{x_1} u (0,x_2) = \psi(x_2), \quad x_1 > 0, \\
u(x_1, 0) = \mu(x_1), \quad x_2 > 0,
\end{array}\right.
\end{equation}
where $f \in C^1 ([0, \infty)\times[0, \infty))$, $\phi \in C^2([0,\infty))$, $\psi \in C^1([0,\infty))$, $\mu \in C^2([0,\infty))$.

If $x_2 - a x_1 > 0$, then the solution of (5.3) at the point $(x_1, x_2)$ can be defined by d'Alembert formula
\begin{equation}\label{5.4}
\begin{aligned}
u(x_1, x_2) &= \frac{\phi(x_2 - a x_1) + \phi(x_2 + a x_1)}{2} + \frac{1}{2 a}\int\limits_{x_2 - a x_1}^{x_2 + a x_1}\psi(\xi)\,d\xi \:+ \\
&+ \frac {1}{2 a}\int\limits_{0}^{x_1} d\tau \int\limits_{x_2 - a(x_1 - \tau)}^{x_2 + a(x_1 - \tau)} f (\tau, \xi) d\xi, \quad x_2 - a x_1 > 0,\quad x_1 > 0,\quad x_2 > 0.
\end{aligned}
\end{equation}

If $x_2 - a x_1 < 0$, then we can use the parallelogram identity (4.1) to derive the solution of (5.3) at the point $(x_1, x_2)$. We can choose $C(x_1, x_2)$, $B\!\left(x_1-\dfrac{x_2}{a},0\right)$, $D\!\left(\dfrac{x_2}{a},a x_1\right)$, $A\left(0,a x_1-x_2\right)$, apply (4.1) and obtain

\begin{equation}\label{5.5}
\begin{aligned}
u(x_1, x_2) &= \mu\left(x_1-\frac{x_2}{a}\right)+\frac{\phi \left(a x_1+x_2\right)-\phi \left(a x_1-x_2\right)}{2} + \frac{1}{2 a} \int\limits_{a x_1-x_2}^{a x_1+x_2} \psi (\xi ) \, d\xi \:+ \\
&+ \frac {1}{2 a}\int\limits_{0}^{\frac{x_2}{a}} d\tau \int\limits_{a x_1 - x_2 + a \tau}^{a x_1 + x_2 - a \tau} f (\tau, \xi) d\xi - \frac{1}{4 a^2} \int\limits_{a x_1-x_2}^{x_2-a x_1} d y_1 \int\limits_{a x_1-x_2}^{a x_1+x_2} f\left(\frac{y_2 - y_1}{2 a},\frac{y_2 + y_1}{2}\right) \,
   d y_2, \\
& x_2 - a x_1 < 0, \quad x_1 > 0, \quad x_2 > 0.
\end{aligned}
\end{equation}
\begin{center}
\includegraphics[width=0.81\textwidth]{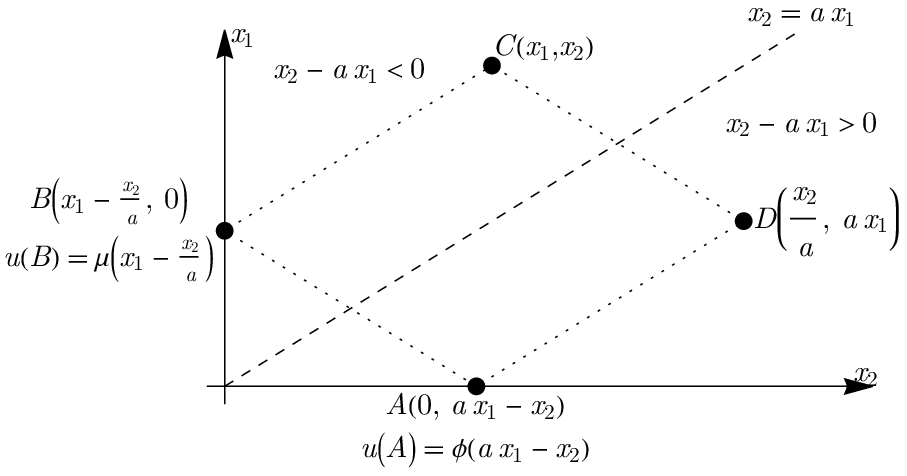}

\fig{3}{To the first mixed problem (5.3).}
\end{center}
Using the representations (5.4) and (5.5), we can easily derive the necessary and sufficient matching conditions $\mu(0) = \phi(0)$, $\mu'(0) = \psi(0)$ and $\mu''(0) = a^2 \phi''(0) + f(0,0)$ under which the solution $u$ of the first mixed problem (5.3) will be classical.

\subsection{Nonlinear wave equation}

For convenience, we will present equations in divergence form later in this section. Let's consider $A u (x_1, x_2) = \partial_{x_1} \partial_{x_2} u (x_1, x_2)$. Then we have $\gamma_1(x_1, x_2) = x_1$, $\gamma_2(x_1, x_2) = x_2$, $\gamma^{-1}_1(y_1, y_2) = y_1$, $\gamma^{-1}_2(y_1, y_2) = y_2$, $A \gamma_1\equiv 0$, $A \gamma_2\equiv 0$.

\subsubsection{Darboux problem}

Let's consider the second Darboux problem for a nonlinear wave equation in divergence form \cite{Jokhadze1}

\begin{equation}\label{5.6}
\left\{\begin{array}{l}
\partial_{x_1} \partial_{x_2} u (\mathbf{x}) + \lambda g(\mathbf{x}, u(\mathbf{x})) = f(\mathbf{x}), \quad 0 < x_1, \alpha x_1 < x_2 < \beta x_1, \\
u(x_1, x_2 = \alpha x_1) = u(x_1, x_2 = \beta x_1) = 0, \quad x_1 > 0,
\end{array}\right.
\end{equation}
where $\lambda \in \mathbb{R}$, $0 < \alpha < 1 < \beta < \infty$, $f \in C^1(\{\mathbf{x}\,|\,0 \leqslant x_1 \land \alpha x_1 \leqslant x_2 \leqslant \beta x_1\})$, $g \in C^1(\{\mathbf{x}\,|\,0 \leqslant x_1 \land \alpha x_1 \leqslant x_2 \leqslant \beta x_1\} \times \mathbb{R})$, $|g(x_1, x_2, z)| \leqslant L_1 + L_2 |z|$, $L_1 \geqslant 0$, $L_2 \geqslant 0$.

We want to obtain an expression for the classical solution $u$ of the problem (5.6) at the point $P_0(x_1, x_2)$. Let us denote by $P_1 M_0 P_0 N_0$ the characteristic parallelogram, whose vertices $N_0$ and $M_0$ lie, respectively, on the segments $x_2 = \alpha x_1$ and $x_2 = \beta x_1$, that is: $N_0: = (x_1, \alpha x_1)$, $M_0: = \left(\beta^{-1} x_2, x_2\right)$, $P_1: = \left(\beta^{-1} x_2, \alpha x_1\right)$. Since $P_1 \in \{\mathbf{x}\,|\,0 < x_1 \land \alpha x_1 < x_2 < \beta x_1\}$, we construct analogously the characteristic parallelogram $P_2 M_1 P_1 N_1$ whose vertices $N_1$ and $M_1$ lie, respectively, on the segments $x_2 = \alpha x_1$ and $x_2 = \beta x_1$. Continuing this process, we obtain the characteristic parallelogram $P_{i+1} M_i P_i N_i$ for which $N_i \in \{\mathbf{x}\,|\, x_2 = \alpha x_1 \}$, $M_i \in \{\mathbf{x}\,|\, x_2 = \beta x_1 \}$, and $N_i: = \left(x_1^{(i)}, \alpha x_1^{(i)}\right)$, $M_i: = \left(\beta^{-1} x_2^{(i)}, x_2\right)$, $P_{i+1}: = \left(\beta^{-1} x_2^{(i)}, \alpha x_1^{(i)}\right)$ if $P_i: = \left(x_1^{(i)}, x_2^{(i)}\right)$.

%
\begin{figure}[h]
\centering
\includegraphics[width=0.81\textwidth]{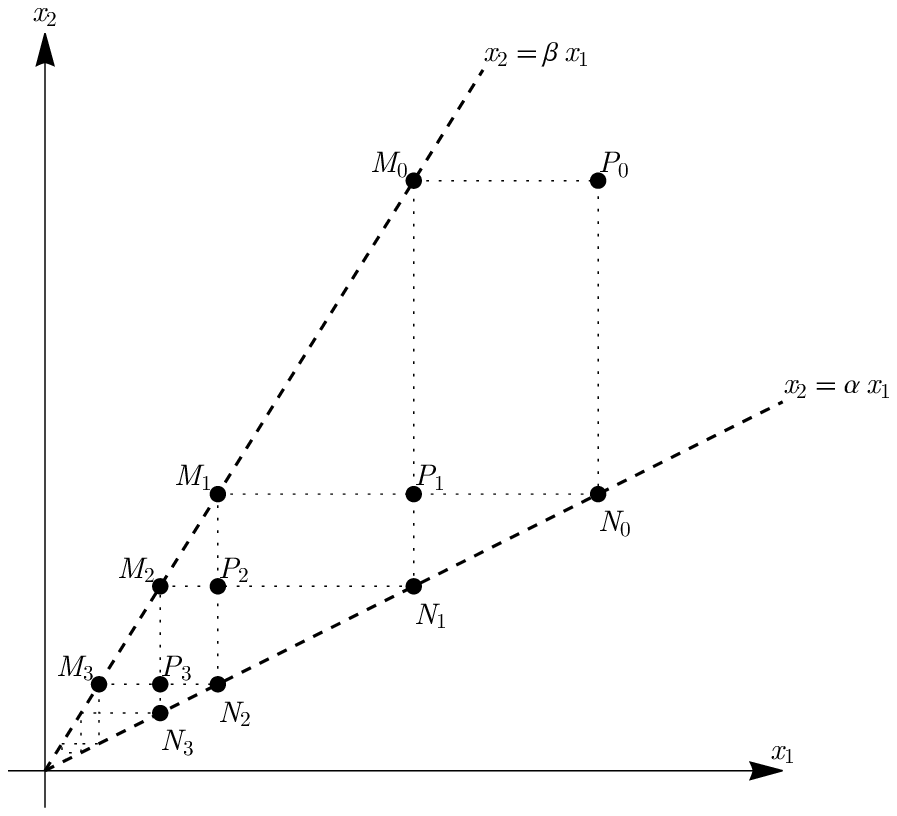}

\fig{4}{To the second Darboux problem (5.6).}
\end{figure}


By (4.1) and (5.6), we have
\begingroup
\allowdisplaybreaks
\begin{align*}
u(P_i) = u(M_i) + u(N_i) - u(P_{i+1}) &+ \iint\limits_{P_{i+1} M_i P_i N_i} \left[f(\mathbf{z}) - \lambda g(\mathbf{z}, u(\mathbf{z}))\right] d\mathbf{z} = \\
= -u(P_{i+1}) &+ \iint\limits_{P_{i+1} M_i P_i N_i} \left[f(\mathbf{z}) - \lambda g(\mathbf{z}, u(\mathbf{z}))\right] d\mathbf{z},\:\:i \in \mathbb{N} \cup \{ 0 \}.
\end{align*}
\endgroup
It follows that
\begingroup
\allowdisplaybreaks
\begin{align*}
u(x_1, x_2) &= u(P_0) = \iint\limits_{P_{1} M_0 P_0 N_0} \left[f(\mathbf{z}) - \lambda g(\mathbf{z}, u(\mathbf{z}))\right] d\mathbf{z} - u(P_{1}) = \\
&= u(P_2) + \iint\limits_{P_{1} M_0 P_0 N_0} \left[f(\mathbf{z}) - \lambda g(\mathbf{z}, u(\mathbf{z}))\right] d\mathbf{z} - \iint\limits_{P_{2} M_1 P_1 N_1} \left[f(\mathbf{z}) - \lambda g(\mathbf{z}, u(\mathbf{z}))\right] d\mathbf{z} = \\
&= (-1)^n u(P_n) + \sum\limits_{i=0}^{n-1} (-1)^i \iint\limits_{P_{i+1} M_i P_i N_i} \left[f(\mathbf{z}) - \lambda g(\mathbf{z}, u(\mathbf{z}))\right] d\mathbf{z}.
\end{align*}
\endgroup
It is clear that $\lim\limits_{n \rightarrow \infty} u(P_n) = u\left(\lim\limits_{n \rightarrow \infty} P_n\right) = u(0, 0) = 0$. Hence, passing to the limit, as $n \rightarrow \infty$, we obtain the following integral representation
\begin{equation}\label{5.7}
u(x_1, x_2) = \sum\limits_{i=0}^{\infty} (-1)^i \iint\limits_{P_{i+1} M_i P_i N_i} \left[f(\mathbf{z}) - \lambda g(\mathbf{z}, u(\mathbf{z}))\right] d\mathbf{z}.
\end{equation}
The further solution of the problem (5.6) is connected with the study of the solvability of Eq. (5.7), and it is given in the work \cite{Jokhadze1}. It turns out that under the conditions given in the statement of the problem (5.6), it has a unique classical solution. But we still notice that in the linear case, i.e., when $\lambda = 0$, the formula (5.7) transforms into
\begin{equation}\label{5.8}
u(x_1, x_2) = \sum\limits_{i=0}^{\infty} (-1)^i \iint\limits_{P_{i+1} M_i P_i N_i} f(\mathbf{z}) d\mathbf{z},
\end{equation}
The series in the right-hand side of equality (5.8) is uniformly and absolutely convergent \cite{Jokhadze1}. Thus, in the linear case, there exists the solution $u$ of (5.6) written in the explicit analytic form (5.8).

\subsection{Linear second-order hyperbolic equation}

As in the previous subsection, we consider $A u (x_1, x_2) = \partial_{x_1} \partial_{x_2} u (x_1, x_2)$. Then we have $\gamma_1(x_1, x_2) = x_1$, $\gamma_2(x_1, x_2) = x_2$, $\gamma^{-1}_1(y_1, y_2) = y_1$, $\gamma^{-1}_2(y_1, y_2) = y_2$, $A \gamma_1\equiv 0$, $A \gamma_2\equiv 0$.

\subsubsection{Goursat problem}

Let's consider the Goursat problem for a linear second-order hyperbolic equation \cite{KorzuykBook}
\begin{equation}\label{5.9}
\left\{\begin{array}{l}
\partial_{x_1} \partial_{x_2} u (\mathbf{x}) + a(\mathbf{x}) \partial_{x_1} u (\mathbf{x}) + b(\mathbf{x}) \partial_{x_2} u (\mathbf{x}) + c(\mathbf{x}) u (\mathbf{x}) = f(\mathbf{x}), \quad x_1^{(0)} < x_1, x_2^{(0)} < x_2, \\
u(x_1 = x_1^{(0)}, x_2) = \phi(x_2), \quad x_2 > x_2^{(0)}, \\
u(x_1, x_2 = x_2^{(0)}) = \psi(x_1), \quad x_1 > x_1^{(0)},
\end{array}\right.
\end{equation}
where $f \in C(\{\mathbf{x}\,|\,x_1^{(0)} \leqslant x_1 \land x_2^{(0)} \leqslant x_2\})$, $\phi \in C^1([x_2^{(0)},\infty))$, $\psi \in C^1([x_1^{(0)},\infty))$ and $\phi(x_2^{(0)}) = \psi(x_1^{(0)})$. We can write the classical solution of (5.9) using the formula (4.1). If we choose $C(x_1, x_2)$, $B(x_1^{(0)}, x_2)$, $D(x_1, x_2^{(0)})$, $A(x_1^{(0)}, x_2^{(0)})$ and apply (4.1), we obtain

\begin{align*}
u (\mathbf{x}) &= u(C) = \phi(x_2) + \psi(x_1) - \psi(x_2^{(0)})\:+ \\
& + \int\limits_{x_1^{(0)}}^{x_1} d y_1 \int\limits_{x_2^{(0)}}^{x_2} [f(\mathbf{y}) - a(\mathbf{y}) \partial_{x_1} u (\mathbf{y}) - b(\mathbf{y}) \partial_{x_2} u (\mathbf{y}) - c(\mathbf{y}) u (\mathbf{y})] d y_2. \stepcounter{equation}\tag{\theequation}\label{5.10}
\end{align*}

A representation of the solution in the form of the integro-differential equation (5.10) is obtained. Under the conditions specified in the formulation of the problem (5.9), Eq. (5.10) will be solvable \cite{KorzuykBook}, and the function $u$ will have the required smoothness. It proves the solvability of the problem (5.9).

\section{\large Conclusion}

In this paper, we have generalized the parallelogram rule for the wave equation to the case of a semilinear hyperbolic equation of the second order. This identity connects not only the values of the points at the vertices of the parallelogram but also the continuum of function values on the parallelogram, in contrast to the linear cases with constant coefficients considered earlier. We have shown how the obtained results can be used in combination with other methods to solve various mixed problems.

%
%

\def\bibname{\vspace*{-30mm}{\centerline{\normalsize References}}}

\vskip 1 cm \footnotesize
\begin{flushleft}
Viktor~Korzyuk, Jan~Rudzko \\ 
Department of Mathematical Cybernetics \\ 
Belarusian State University \\ 
4 Nezavisimosti Avenue,\\ 
Minsk, Belarus \\ 
E-mails: korzyuk@bsu.by, janycz@yahoo.com 
\end{flushleft}


\vskip0.5cm
\begin{flushright}
Received: ??.??.20??
\end{flushright}


\begin{thebibliography}{20}
\thispagestyle{headings}
\footnotesize

 \bibitem{Amelya}
 V.V.~Amel’kin, {\it Differential Equations.}
 BSU, Minsk, 2012 (in Russian).
 
 \bibitem{Bitsadze1}
 A.V.~Bitsadze, A.M.~Nakhushev, 
 {\it On the theory of equations of mixed type in multidimensional domains.}
 Differ. Uravn. 10 (1974), no.~1, 2184--2191 (in Russian).

 \bibitem{Courant}
 R.~Courant, D.~Hilbert, {\it Methods of Mathematical Physics:} Vol. 2. {\it Partial Differential Equations.}  
 Wiley, New York, 1989.
 
 \bibitem{DiBenedetto}
 E.~DiBenedetto, {\it Partial Differential Equations.} 2nd. ed.
 Birkh\"{a}user Boston, 2010.
 
 \bibitem{Evans}
 L.C.~Evans, {\it Partial differential equations.} 2nd. ed.
 Providence, 2010. DOI: 10.1090/gsm/019

 \bibitem{Hoermander1}
 L.~H\"{o}rmander,
 {\it Asgeirsson's Mean Value Theorem and Related Identities.}
 Journal of Functional Analysis. 184 (2001), no.~2, 377--401. DOI: 10.1006/jfan.2001.3743.
 
 \bibitem{Ilin2}
  V.A.~Il'in, E.I.~Moiseev,
 {\it A mean value formula for the associated functions of the Laplace operator.} 
 Differ. Uravn. 17 (1981), no.~10., 1908--1910 (in Russian).
 
 \bibitem{Ilin3}
  V.A.~Il'in,
 {\it Fourier series in fundamental systems of functions of the Beltrami operator.} 
 Differ. Uravn. 5 (1969), no.~11., 1940--1978 (in Russian).
 
 \bibitem{Ilin1}
  V.A.~Il'in, 
  {\it Some properties of a regular solution of the Helmholtz equation in a two-dimensional domain.} 
 Mathematical Notes of the Academy of Sciences of the USSR. 15 (1974), 529--532. DOI: 10.1007/BF01152829
 
 \bibitem{John}
 F.~John, {\it Plane Waves and Spherical Means Applied to Partial Differential Equations.}
 Springer, New York, 1981. DOI: 10.1007/978-1-4613-9453-2

 \bibitem{Jokhadze1}
 S.~Kharibegashvili, O.~Jokhadze,
 {\it The second Darboux problem for the wave equation with integral nonlinearity.}
 Transactions of A. Razmadze Mathematical Institute. 170 (2016), no.~3, 385--394. DOI: 10.1016/j.trmi.2016.09.002.
 
 \bibitem{KorzuykBook}
 V.I.~Korzuyk, {\it Equations of mathematical physics: textbook.} 2nd. ed.
 LENARD, Moscow, 2021 (in Russian).
 
 \bibitem{Korzyuk2}
 V.I.~Korzyuk, J.V.~Rudzko, 
 {\it Classical Solution of the First Mixed Problem for the Telegraph Equation with a Nonlinear Potential.} 
 Differential Equations. 58 (2022), no.~2, 175--186.
 
 \bibitem{Korzyuk1}
 V.I.~Korzyuk, 
 {\it Solution of the mixed problem for the one-dimensional wave equation with the use of the characteristic parallelogram method.} 
 Doklady Nacional'noj akademii nauk Belarusi. 61 (2017), no.~3, 7--13 (in Russian).
 
 \bibitem{Kovnatskaya1}
 V.I.~Korzyuk, O.A.~Kovnatskaya, 
 {\it Solutions of problems for the wave equation with conditions on the characteristics.}
 Proceedings of the National Academy of Sciences of Belarus. Physics and Mathematics series. 57 (2021), no.~2, 148--155 (in Russian). DOI: 10.29235/1561-2430-2021-57-2-148-155
 
 \bibitem{Kuptsov1}
  L.P.~Kuptsov, 
  {\it Mean property for the heat-conduction equation.}
 Mathematical Notes of the Academy of Sciences of the USSR. 29 (1981), 110--116. DOI: 10.1007/BF01140922
  
 \bibitem{Kuptsov2}
  L.P.~Kuptsov,
  {\it Mean value theorem and a maximum principle for Kolmogorov's equation.}
 Mathematical Notes of the Academy of Sciences of the USSR. 15 (1974), 280--286. DOI: 10.1007/BF01438384
  
 \bibitem{Kuptsov3}
  L.P.~Kuptsov,
  {\it Property of the mean for the generalized equation of A. N. Kolmogorov. I.}
 Differ. Uravn. 19 (1983), no.~2, 295--304 (in Russian).
 
 \bibitem{Lavrentiev}
 M.A.~Lavrentiev, B.V.~Shabat, {\it Problems of Hydrodynamics and Continuum Mechanics.}
 Nauka, Moscow, 1973 (in Russian).
 
 \bibitem{Matus1}
 P.~Matus, A.~Ko\l{}ody\'{n}ska,
 {\it Exact difference schemes for hyperbolic equations.}
 Computational methods in applied mathematics. 7 (2007), no.~4, 341--364.
 
 \bibitem{Matus2}
 P.~Matus, U.~Irkhin, M.~Lapinska-Chrzczonowicz
 {\it Exact difference schemes for time-dependent problems.}
 Computational methods in applied mathematics. 5 (2005), no.~4, 422--448.
 
 \bibitem{Meshkov2}
 V.Z.~Meshkov, I.P.~Polovinkin, M.V.~Polovinkina, Yu.D.~Ermakova, S.A.~Rabeeakh, 
 {\it А mean-value formula for a two-dimensional linear hyperbolic equation.}
 Proceedings of Voronezh State University. Series: Physics. Mathematics. (2016), no.~4, 121--126 (in Russian).
 
 \bibitem{Meshkov3}
 V.Z.~Meshkov, Yu.D.~Ermakova, I.P.~Polovinkin, 
 {\it Difference Mean-Value Formula for a Two-Dimensional Linear Fourth Order Hyperbolic Equation.} 
 J. Math. Sci. 219 (2016), no.~2, 203--207. DOI: 10.1007/s10958-016-3097-2
 
 \bibitem{Meshkov1}
 V.Z.~Meshkov, I.P.~Polovinkin, M.V.~Polovinkina, Yu.D.~Ermakova, S.A.~Rabeeakh, 
 {\it Difference mean-value formula for two-dimensional linear hyperbolic equations of third order.}
 Proceedings of Voronezh State University. Series: Physics. Mathematics. (2015), no.~3. 112--119 (in Russian).
 
 \bibitem{Meshkov5}
 V.Z.~Meshkov, I.P.~Polovinkin, 
 {\it Mean value properties of solutions of linear partial differential equations.}
 J. Math. Sci. 160 (2009), 45--52. DOI: 10.1007/s10958-009-9483-2
 
 \bibitem{Meshkov8}
 V.Z.~Meshkov, I.P.~Polovinkin,
 {\it On the derivation of new mean-value formulas for linear differential equations with constant coefficients.} 
 Diff. Equat. 47 (2011), 1746--1753. DOI: 10.1134/S0012266111120044
 
 \bibitem{PolovinkinEll1}
  A.L.~Muglanov, I.P.~Polovinkin, M.V.~Polovinkina,
  {\it Two-point mean value formulas for some elliptic equations spaces with constant curvature.}
 Journal of Physics: Conference Series. 973 (2018).
 
 \bibitem{Pokrovskii1}
 A.V.~Pokrovskii. 
 {\it Mean value theorems for solutions of linear partial differential equations.} 
 Math. Notes. 64 (1998), 220--229. DOI: 10.1007/BF02310309
 
 \bibitem{Meshkov7}
 I.P.~Polovinkin, M.V.~Polovinkina, 
 {\it Mean Value Theorems and Properties of Solutions of Linear Differential Equations.}
 In: V.~Kravchenko, S.~Sitnik (eds), Transmutation Operators and Applications. Trends in Mathematics. 
 Birkhäuser, Cham, 2020, 587--602. DOI: 10.1007/978-3-030-35914-0\_26
 
 \bibitem{Meshkov4}
 I.P.~Polovinkin, 
 {\it Mean Value Theorems for Linear Partial Differential Equations.} 
 J. Math. Sci. 197 (2014), no.~3, 399--403. DOI: 10.1007/s10958-014-1721-6
 
 \bibitem{Meshkov6}
 I.P.~Polovinkin, 
 {\it The converse of the mean-value theorem for the wave equation.}
 Differ. Uravn. 27 (1991), no.~11, 1987--1990 (in Russian).
 
 \bibitem{Meshkov10}
 M.V.~Polovinkina,
 {\it Mean-value formula for a hyperbolic equation with a factorizable operator.} 
 Proceedings of the Voronezh Winter Mathematical School ``Modern Methods of Function Theory and Related Problems.'' January 28 -- February 2, 2019. Part 4, Itogi Nauki i Tekhniki. Ser. Sovrem. Mat. Pril. Temat. Obz., 173, VINITI, Moscow, 126--131 (in Russian).
 
 \bibitem{Meshkov9}
 M.V.~Polovinkina, I.P.~Polovinkin, A.L.~Muglanov,
 {\it Two-Point Mean Value Formulas.} 
 Lobachevskii J. Math. 41 (2020), 853--868. DOI: 10.1134/S1995080220050091
 
 \bibitem{Polyanin2005}
 A.D.~Polyanin, V.~Zaitsev, A.I~Zhurov, {\it Solution Methods for Nonlinear Equations of Mathematical Physics and Mechanics.}
 Fizmatlit, Moscow, 2005 (in Russian). DOI: 10.13140/RG.2.1.4809.8729
 
 \bibitem{Vainberg}
 M.M.~Vainberg, 
 {\it Integro-differential equations.}
 Itogi Nauki. Ser. Mat. Anal. Teor. Ver. Regulir. 1962, (1964), 5--37 (in Russian).

 
\end{thebibliography}
 \end{document}